\date{}
\renewcommand{\uppercasenonmath}[1]{}
\theoremstyle{plain}
\newtheorem{theorem}{Theorem}[section]
\newtheorem{lemma}[theorem]{Lemma}
\newtheorem{corollary}[theorem]{Corollary}
\newtheorem*{open question}{Open Question}
\theoremstyle{definition}
\theoremstyle{remark}
\def\bc{\begin{center}}
\def\ec{\end{center}}
\def\ker{{\rm ker}}
\def\and{{\rm and}}
\def\Spec{{\rm Spec}}
\def\D{\textbf{D}}
\begin{document}
\begin{center}
{\large  \bf The Telescope Conjecture for von Neumann regular rings}

\vspace{0.5cm}   Xiaolei Zhang%$^{a}$,\ Wei Zhao$^{b}$
%\bigskip

{\footnotesize a.\ Department of Basic Courses, Chengdu Aeronautic Polytechnic, Chengdu 610100, China\\
%b.\ School of Mathematical Sciences, Sichuan Normal University, Chengdu 610068, China\\
%b. \ School of Mathematics, ABa Teachers University, Wenchuan 623002, China

E-mail: zxlrghj@163.com\\}
\end{center}
%\begin{figure}[b]
%\rule[-2.5truemm]{5cm}{0.1truemm}\\[2mm]
%{\small }
%\end{figure}

%\begin{figure}[b]
%\rule[-2.5truemm]{5cm}{0.1truemm}\\[2mm]
%{\small }
%\end{figure}
\bigskip
\centerline { \bf  Abstract}
\bigskip
\leftskip10truemm \rightskip10truemm \noindent

In this note, we show that any epimorphism originating at a von Neumann regular ring (not necessary commutative) is a universal localization. As an application, we prove that the Telescope Conjecture holds for the unbounded derived categories of von Neumann regular rings (not necessary commutative).
\vbox to 0.3cm{}\\
{\it Key Words:} derived category; ring epimorphism;
von Neumann regular ring; the Telescope Conjecture.\\
{\it 2010 Mathematics Subject Classification:} 16E50,18G80.

\leftskip0truemm \rightskip0truemm
\bigskip
%\section { \bf Introduction    }
%\bigskip

In this note, $R$ is a ring with unit and $\D(R)$ denotes the unbounded derived category of $R$. The Telescope Conjecture in triangulated category began from Bousfield \cite{Bou79} and Ravenel\cite{Rav84} for the stable homotopy category in algebraic topology and it has been formulated in a more general setting of compactly generated triangulated category as follows (see \cite{SS08}).

\textbf{Telescope Conjecture for Triangulated Categories} \ \ {\sl
Every smashing localizing subcategory of a compactly generated triangulated category is generated by compact objects.}\\
In 1992,  Neeman \cite{Nee92} showed that the Telescope Conjecture holds for  unbounded derived category of  commutative noetherian rings, essentially by classifying all the localizing subcategories by prime ideals.  Since then, the conjecture has drawn lots of attention of many algebraists. In 2010, Krause et al. \cite{Kra10} showed the Telescope Conjecture also holds for derived category of all hereditary rings via Ext-orthogonal pairs. However, Keller \cite{Kel94} gave an example of a valuation domain where the conjecture does not hold in 1994. In order to get a more accurate conclusion,  Bazzoni and \v{S}\v{t}ov\'{i}\v{c}ek gave a simple discrimination for the commutative rings of weak global dimension at most one in \cite{Baz17} recently. In that paper,  for a  commutative ring $R$ of weak global dimension at most one, the Telescope Conjecture holds for $\D(R)$ if and only if there is no $p\in \Spec(R)$ such that $pR_p$ is a non-zero idempotent ideal in $R_p$. Consequently, the Telescope Conjecture for all commutative von Neumann regular rings. In this note, we prove the Telescope Conjecture  holds for all von Neumann regular rings (commutative or non-commutative) by classifying their epimorphisms.

\section{The main result}
Firstly, we propose two theorems in \cite{Baz17}, which provide a direction to prove the main result.

\begin{theorem}\label{1}{\rm ({\cite[Theorem 3.10]{Baz17}})} Let $R$ be an algebra of weak global dimension at most one over a commutative ring $k$, then the assignment:
$$f\mapsto \{X\in \D(R)|X\otimes^{\mathbb{L}}_R S=0\}$$
is a bijection between

(1) equivalence classes of homological epimorphisms $f: R\rightarrow S$ originating at $R$, and

(2) smashing localizing subcategories $\mathfrak{X}\subseteq \D(R)$.
\end{theorem}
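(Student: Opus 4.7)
The plan is to prove the bijection by establishing the forward direction, its injectivity, and surjectivity separately, the last being where the weak global dimension hypothesis enters essentially.

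First I would verify that $f\mapsto\mathfrak{X}_f:=\{X\in\D(R)\mid X\otimes^{\mathbb{L}}_R S=0\}$ sends a homological epimorphism to a smashing localizing subcategory. A standard characterization says that $f:R\to S$ is a homological epimorphism iff the derived restriction-of-scalars functor $f_*:\D(S)\to\D(R)$ is fully faithful. The functor $-\otimes^{\mathbb{L}}_R S$ admits $f_*$ as right adjoint, and since $f_*$ preserves arbitrary coproducts, the induced Bousfield localization is smashing; its kernel is precisely $\mathfrak{X}_f$. Injectivity follows because the reflective subcategory $\mathfrak{X}_f^\perp$ recovers $f$ up to the standard equivalence of ring epimorphisms originating at $R$.

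The heart of the proof is surjectivity. Given a smashing localizing subcategory $\mathfrak{X}\subseteq\D(R)$, there is a localization triangle $\Gamma R\to R\to LR\to\Gamma R[1]$ in $\D(R)$, and I would show in turn that (i) $LR$ is quasi-isomorphic to an ordinary $R$-module $S$; (ii) the canonical map $R\to S$ is a ring epimorphism; and (iii) it is homological. Step (i) is where the hypothesis enters decisively: bounded flat dimension controls the cohomological amplitude of objects produced by the localization, and one can argue via truncation or a flat-resolution spectral sequence that $H^i(LR)=0$ for $i\neq 0$. Step (ii) uses that the reflection map $R\to LR$ induces $\End_{\D(R)}(LR)\cong H^0(LR)=S$, which is the standard characterization of a ring epimorphism. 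Step (iii) uses the smashing property $LR\otimes^{\mathbb{L}}_R LR\cong LR$, which for a module $S$ translates into $\Tor^R_{\geq 1}(S,S)=0$.

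The principal obstacle I anticipate is step (i). Keller's valuation-domain counterexample shows that, without the weak global dimension bound, $LR$ can carry nonzero cohomology in many degrees, so the argument must exploit weak global dimension at most one tightly, most likely through analysis of $\Gamma R$ via flat resolutions of length one together with the compatibility of the localization with coproducts (that is, the smashing property). Once step (i) is secured, steps (ii) and (iii) become formal consequences of the smashing property and the standard Geigle--Lenzing-type dictionary between ring epimorphisms originating at $R$ and reflective subcategories of $\D(R)$, completing the bijection.
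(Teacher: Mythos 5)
A preliminary remark: the paper does not prove this statement at all. Theorem \ref{1} is quoted verbatim from Bazzoni and \v{S}\v{t}ov\'{i}\v{c}ek \cite[Theorem 3.10]{Baz17} and used as a black box, so there is no in-paper argument to compare against; your proposal has to be measured against the proof in \cite{Baz17}. At the level of architecture your outline does agree with that proof: well-definedness and injectivity are the formal part (a homological epimorphism is exactly one whose derived restriction functor $\D(S)\to\D(R)$ is fully faithful, the induced localization is smashing because restriction preserves coproducts, and $f$ is recovered from the local objects), and the real content is surjectivity, carried out, as you say, by showing that $LR$ is concentrated in cohomological degree $0$ and that $R\to H^0(LR)$ is a homological epimorphism.

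The genuine gap is your step (i), which you dispose of in one sentence. That step \emph{is} the theorem. The hypothesis of weak global dimension at most one bounds the Tor-amplitude of a derived tensor product of two modules, but it does not by itself bound the cohomological amplitude of $LR$: for a general smashing localization $LR$ may have cohomology in infinitely many degrees, and neither "truncation" nor a "flat-resolution spectral sequence" applied to $\Gamma R$ rules this out, since $\Gamma R$ itself has no a priori amplitude bound. The actual argument must feed the idempotence $LR\otimes^{\mathbb{L}}_R LR\cong LR$ and the vanishing $\Gamma R\otimes^{\mathbb{L}}_R LR=0$ into K\"unneth-type exact sequences (available precisely because $\Tor_{\geq 2}$ vanishes) and bootstrap degree by degree; this is the bulk of Section 3 of \cite{Baz17} and cannot be waved through. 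A secondary but telling error: you cite Keller's valuation-domain example as showing that $LR$ fails to be a module "without the weak global dimension bound," but a valuation domain has weak global dimension at most one, so Keller's ring satisfies the hypotheses of this very theorem and the bijection of Theorem \ref{1} holds for it. What fails in Keller's example is compact generation of the smashing subcategory, i.e.\ the refinement recorded in Theorem \ref{2} and Corollary \ref{5}, not the concentration of $LR$ in degree zero; so the example gives no guidance on where the hypothesis enters in step (i).
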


\begin{theorem}\label{2}{\rm ({\cite[Theorem 4.5]{Baz17}})} Let $R$ be a right semihereditary ring , then the assignment:

$$f\mapsto \{X\in \D(R)|X\otimes^{\mathbb{L}}_R S=0\}$$
restricts to a bijection between

(1) equivalence classes of universal localizations $f: R\rightarrow S$ originating at $R$, and

(2) compactly generated localizing subcategories $\mathfrak{X}\subseteq \D(R)$.
\end{theorem}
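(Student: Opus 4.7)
The plan is to establish the bijection in Theorem \ref{2} in both directions: that the assignment $f \mapsto \mathfrak{X}_f := \{X \in \D(R) : X \otimes^{\mathbb{L}}_R S = 0\}$ sends a universal localization to a compactly generated localizing subcategory, and that every compactly generated localizing subcategory arises this way. The right semihereditary hypothesis (weak global dimension at most one with f.g.\ one-sided ideals projective) is what aligns the two sides, since it keeps perfect complexes concentrated in two consecutive degrees and supplies the $\Tor$-vanishing that makes universal localizations homological epimorphisms, so that Theorem \ref{1} is available as scaffolding.

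For the forward direction I would let $f \colon R \to R_\Sigma$ be the universal localization at a set $\Sigma$ of morphisms between f.g.\ projective right $R$-modules, and first verify that $f$ is a homological epimorphism. Weak global dimension one makes $\Tor^R_i(R_\Sigma, R_\Sigma) = 0$ automatic for $i \geq 2$; the $i = 1$ vanishing follows because each $\sigma \in \Sigma$ becomes invertible in $R_\Sigma$, so the defining two-term presentations of the cokernels are annihilated after $-\otimes_R R_\Sigma$. Theorem \ref{1} then guarantees that $\mathfrak{X}_f$ is smashing, and the natural compact generators are the mapping cones $\mathrm{Cone}(\sigma)$ for $\sigma \in \Sigma$: each is a two-term perfect complex (hence compact in $\D(R)$), and each vanishes after $-\otimes^{\mathbb{L}}_R R_\Sigma$ by construction. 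An orthogonality argument then identifies $\mathfrak{X}_f$ with the localizing subcategory generated by these cones, upgrading smashing to compactly generated.

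For the converse, let $\mathfrak{X}$ be compactly generated by a set $\{C_\lambda\}$ of compact objects of $\D(R)$. Perfect complexes over a right semihereditary ring are quasi-isomorphic to two-term complexes of f.g.\ projectives, so each $C_\lambda$ is represented, up to shift and summand, by a morphism $\sigma_\lambda \colon P^{-1}_\lambda \to P^{0}_\lambda$; setting $\Sigma := \{\sigma_\lambda\}$ and forming the universal localization $R \to R_\Sigma$, the forward direction recovers $\mathfrak{X}_{R_\Sigma}$ as the localizing subcategory generated by the $\mathrm{Cone}(\sigma_\lambda) \simeq C_\lambda$, yielding $\mathfrak{X}_{R_\Sigma} = \mathfrak{X}$. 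Injectivity of the overall correspondence follows from Theorem \ref{1}: a homological epimorphism is determined up to equivalence by its associated smashing subcategory, so two universal localizations producing the same $\mathfrak{X}_f$ must be equivalent.

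I expect the main obstacle to lie in the orthogonality identification in the forward direction, specifically in showing that $X \otimes^{\mathbb{L}}_R R_\Sigma = 0$ forces $X$ into the localizing subcategory generated by the cones $\mathrm{Cone}(\sigma)$. One inclusion is immediate from $\mathrm{Cone}(\sigma) \otimes^{\mathbb{L}}_R R_\Sigma = 0$, but the reverse inclusion requires the recollement triangle produced by the homological epimorphism $f$, together with a careful use of the semihereditary hypothesis to keep all the relevant resolutions two-term so that the generating process terminates after finitely many steps per object.
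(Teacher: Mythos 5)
You should first note that the paper does not prove this statement at all: it is imported verbatim as \cite[Theorem 4.5]{Baz17} and used as a black box, so there is no in-paper argument to compare yours against. Measured against the actual proof in \cite{Baz17}, your outline has the right skeleton (universal localizations of such rings are homological epimorphisms, the cones of the maps in $\Sigma$ are compact generators, perfect complexes over a right semihereditary ring split into two-term pieces, and Theorem \ref{1} supplies injectivity of the correspondence), but it is thinnest exactly where the mathematical content lies.

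The most serious gap is the claim that $\Tor_1^R(R_\Sigma,R_\Sigma)=0$ ``follows because each $\sigma\in\Sigma$ becomes invertible in $R_\Sigma$.'' That reasoning shows only that $\mathrm{Cone}(\sigma)\otimes^{\mathbb{L}}_R R_\Sigma=0$, i.e.\ that the cokernels (and kernels) of the $\sigma$ are killed; it says nothing about $R_\Sigma$ itself, which is constructed by generators and relations and admits no evident $R$-module presentation in terms of those cokernels. The vanishing $\Tor_1^R(R_\Sigma,R_\Sigma)=0$ is precisely the assertion that the universal localization is \emph{stably flat}, which is false for general rings (Neeman and Ranicki produced counterexamples) and is a substantial theorem for rings of weak global dimension at most one --- this is where the semihereditary hypothesis must do real work, and your sketch does not engage with it. The second gap is one you flag yourself: identifying $\mathfrak{X}_f$ with the localizing subcategory generated by the cones. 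The standard route is to observe that the latter is compactly generated, hence smashing, hence of the form $\mathfrak{X}_g$ for a homological epimorphism $g\colon R\to S$ by Theorem \ref{1}, and then to play the universal property of $R\to R_\Sigma$ against the universal property of $g$ to identify the two; naming ``an orthogonality argument'' and ``the recollement triangle'' points at the right tools but does not constitute the argument.
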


According to these two theorems,  we have the following discrimination methods for  the Telescope Conjecture for unbounded derived categories of right semihereditary rings.

\begin{corollary}\label{5}
Let $R$ be a right semihereditary ring, the Telescope Conjecture holds for $\D(R)$ if and only if every homological epimorphism from $R$ is a universal localization.
\end{corollary}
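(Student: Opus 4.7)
The plan is to derive Corollary~\ref{5} as a direct bookkeeping consequence of Theorems~\ref{1} and \ref{2}, exploiting the fact that both bijections are induced by the \emph{same} assignment $f \mapsto \{X\in \D(R)\mid X\otimes^{\mathbb{L}}_R S=0\}$.

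First, I would set up the set-theoretic picture. Over a right semihereditary ring $R$, every universal localization is in particular a homological epimorphism, and every compactly generated localizing subcategory of the compactly generated triangulated category $\D(R)$ is automatically smashing. Thus the bijection of Theorem~\ref{2} is the restriction of the bijection of Theorem~\ref{1} to the sub-collection ``universal localizations'' on one side and ``compactly generated localizing subcategories'' on the other. In particular, a homological epimorphism $f\colon R\to S$ is (equivalent to) a universal localization if and only if the associated smashing subcategory $\mathfrak{X}_f=\{X\in \D(R)\mid X\otimes^{\mathbb{L}}_R S=0\}$ is compactly generated.

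Next, I would translate the Telescope Conjecture through this dictionary in both directions. For the ``only if'' part, assume the Telescope Conjecture holds for $\D(R)$, and let $f\colon R\to S$ be a homological epimorphism. Theorem~\ref{1} produces a smashing localizing subcategory $\mathfrak{X}_f$, which by hypothesis is compactly generated. Theorem~\ref{2} then furnishes a universal localization $g\colon R\to T$ with $\mathfrak{X}_g=\mathfrak{X}_f$, and the injectivity in Theorem~\ref{1} forces $f$ and $g$ to be equivalent as ring epimorphisms, so $f$ itself is (equivalent to) a universal localization. For the ``if'' part, assume every homological epimorphism from $R$ is a universal localization, and let $\mathfrak{X}\subseteq \D(R)$ be any smashing localizing subcategory. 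Theorem~\ref{1} gives a homological epimorphism $f\colon R\to S$ with $\mathfrak{X}_f=\mathfrak{X}$; by hypothesis $f$ is a universal localization, so Theorem~\ref{2} yields that $\mathfrak{X}=\mathfrak{X}_f$ is compactly generated, as required.

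The argument is essentially a two-sided chase between the two bijections, and I do not expect a substantial obstacle. The only delicate point is to be careful that ``universal localization'' and ``homological epimorphism'' are taken up to the same notion of equivalence used in Theorems~\ref{1} and \ref{2}; once this is fixed, the identifications $\mathfrak{X}_f=\mathfrak{X}_g \Rightarrow f\sim g$ and ``compactly generated $\Rightarrow$ smashing'' do all the work.
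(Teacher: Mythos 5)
Your proposal is correct and is precisely the argument the paper intends: the paper gives no written proof, simply asserting the corollary ``according to these two theorems,'' and your two-sided chase through the common assignment $f\mapsto\{X\in\D(R)\mid X\otimes^{\mathbb{L}}_R S=0\}$ fills in exactly the intended bookkeeping (using that Theorem~\ref{2} is stated as a restriction of the bijection of Theorem~\ref{1}, so injectivity there identifies a homological epimorphism with the universal localization inducing the same subcategory).
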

We characterize the epimorphisms originating at a von Neumann regular ring (commutative or non-commutative).
\begin{lemma}\label{3}
Every epimorphism originating at a von Neumann regular ring is surjective.
\end{lemma}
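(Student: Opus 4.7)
The plan is to exploit two features of a von Neumann regular ring $R$: every $R$-module is flat, and every finitely generated one-sided ideal is generated by an idempotent. Together these should force the cokernel of $f\colon R\to S$ to vanish. First I would reduce to the case where $f$ is injective: $\ker f$ is a two-sided ideal, and every quotient of a von Neumann regular ring is again von Neumann regular, so after replacing $R$ by $R/\ker f$ I may assume $R\hookrightarrow S$.

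Next, consider the short exact sequence $0\to R\to S\to C\to 0$ of right $R$-modules, with $C=S/R$. Because every left $R$-module is flat, $S$ is flat as a left $R$-module, so tensoring on the right over $R$ by $S$ preserves exactness. The epimorphism hypothesis says that the multiplication map $S\otimes_R S\to S$ is an isomorphism, and under the identifications $R\otimes_R S\cong S\cong S\otimes_R S$ the first map of the tensored sequence becomes the identity on $S$. Consequently $C\otimes_R S=0$.

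The heart of the argument is to turn this vanishing into concrete equations. For $t\in S$, the equational criterion for the vanishing of a simple tensor applied to $\bar t\otimes 1\in C\otimes_R S$ yields elements $r_1,\dots,r_k\in R$ and $s_1,\dots,s_k\in S$ with $\sum_i r_is_i=1_S$ and $tr_i\in R$ for every $i$. Von Neumann regularity then produces an idempotent $e\in R$ with $r_1R+\cdots+r_kR=eR$. Writing $e=\sum_i r_iv_i$ with $v_i\in R$ gives $te=\sum_i(tr_i)v_i\in R$; writing $r_i=eu_i$ and using $\sum_i r_is_i=1_S$ together with $e^2=e$ forces $f(e)=1_S$. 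Injectivity of $f$ then yields $e=1_R$, so $t=te\in R$, whence $S=R$. The step I expect to be the main obstacle is the careful invocation of the equational criterion in the noncommutative setting and the consistent tracking of sidedness throughout; the idempotent manipulation itself is a short calculation once the $r_i$ are in hand.
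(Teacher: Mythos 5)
Your argument is correct, but it follows a genuinely different route from the paper's. The paper works with the dominion characterization of ring epimorphisms: citing the matrix (zigzag) criterion from \cite[Theorem 5.11]{Baz17}, it writes an arbitrary $b\in S$ as $Xf(Y)Z$ with $Xf(Y)$ and $f(Y)Z$ having entries in $f(R)$, pads $Y$ to a square matrix, uses that $M_k(R)$ is again von Neumann regular to find $T$ with $f(Y)=f(Y)Tf(Y)$, and concludes $b=[Xf(Y)]\,T\,[f(Y)Z]\in f(R)$. You instead use the tensor characterization of epimorphisms ($S\otimes_RS\cong S$), flatness of $S$ over the von Neumann regular ring $R$ to deduce $(S/R)\otimes_RS=0$, the equational criterion for the vanishing of a simple tensor to extract the relations $\sum_i r_is_i=1_S$ and $tr_i\in R$, and finally the idempotent generation of finitely generated one-sided ideals. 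All the steps check out, including the delicate ones: the equational criterion in the form you need (for a right module tensored with a left module, $m\otimes n=0$ yields $a_j\in R$, $n_j'\in N$ with $ma_j=0$ and $n=\sum_j a_jn_j'$) is the standard one and your sidedness is consistent throughout; the computation $f(e)w=1_S$, $f(e)^2=f(e)$ $\Rightarrow$ $f(e)=f(e)^2w=f(e)w=1_S$ is valid; and $te=\sum_i(tr_i)v_i\in R$ then gives $t=t\cdot 1_S=te\in R$ (the appeal to injectivity to get $e=1_R$ is not even needed). The trade-off: the paper's proof is shorter but rests on the nontrivial dominion theorem plus regularity of matrix rings, whereas yours is self-contained modulo textbook facts and makes transparent exactly where von Neumann regularity enters (universal flatness and idempotent-generated ideals).
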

\begin{proof}
Let $R$ be a von Neumann regular ring, $f:R\rightarrow S $ an epimorphism and $I=\ker(f)$. Consider the following natural commutative diagram, $$\xymatrix{
R \ar[r]^f \ar@{->>}[rd]_g &
S\\
&R/I\ar@{>->}[u]^h.}$$
Then $R/I$ is a von Neumann regular ring  and $h$ is an epimorphism. To show $h$ is an isomorphism, we just need to prove $h$ is surjective.
Let $b\in S$, then $b$ is in the dominion of $f$. That is, there exist matrices $X\in M_{1\times m}(S), Y\in M_{m\times n}(R)$ and $z\in M_{n\times l}(S)$ such that $s=X f(Y) Z$ by \cite[Theorem 5.11]{Baz17}. By adding zeros, we can assume $f(Y)$ is a $k=max(m,n)$-square matrix. Because $M_{k\times k}(R)$ is also a  von Neumann regular ring, $f(Y)=f(Y) T f(Y)$ for some $T\in M_{k\times k}(R)$. Consequently, $b=Xf(Y)Z=[Xf(Y)]T[f(Y)Z]\in R/I$ and $f$ is surjective.
\end{proof}
\begin{lemma}\label{31}
Let $R$ be a von Neumann regular ring, every epimorphism originating at $R$ is a universal localization.
\end{lemma}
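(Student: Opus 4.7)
The plan is to combine Lemma \ref{3} with the idempotent structure of two-sided ideals in a von Neumann regular ring, in order to exhibit $f$ explicitly as a Schofield universal localization. By Lemma \ref{3} the epimorphism $f: R \to S$ is surjective, so $S \cong R/I$ for the two-sided ideal $I = \ker f$. It therefore suffices to produce a set $\Sigma$ of morphisms between finitely generated projective right $R$-modules whose universal localization $R \to R_\Sigma$ coincides with the projection $R \to R/I$.

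The key structural input is that every two-sided ideal of a von Neumann regular ring is generated, as a two-sided ideal, by idempotents. Indeed, for any $a \in I$ von Neumann regularity of $R$ supplies $x \in R$ with $a = axa$; then $e := ax$ is an idempotent of $I$ with $a = ea \in ReR$. Writing $I = \sum_{\alpha \in A} Re_\alpha R$ for such a family of idempotents $e_\alpha \in I$, I attach to each $\alpha$ the right $R$-module homomorphism
\[
\phi_\alpha : R \longrightarrow R, \qquad r \longmapsto (1-e_\alpha)r,
\]
namely left multiplication by $1-e_\alpha$ on the free right $R$-module $R$. Each $\phi_\alpha$ is a map between finitely generated projective right $R$-modules, so $\Sigma := \{\phi_\alpha : \alpha \in A\}$ is admissible for Schofield's construction, and the universal localization $f_\Sigma : R \to R_\Sigma$ exists.

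The identification $R_\Sigma \cong R/I$ over $R$ then reduces to a comparison of universal properties. For any ring homomorphism $g : R \to T$, the map $\phi_\alpha \otimes_R T : T \to T$ is left multiplication by $g(1-e_\alpha)$. Since $g(e_\alpha)$ is idempotent in $T$, its kernel is $g(e_\alpha)T$, which contains $g(e_\alpha)$ itself; hence injectivity forces $g(e_\alpha) = 0$, and conversely once $g(e_\alpha)=0$ the map is the identity. Therefore $g$ inverts every element of $\Sigma$ iff $g(e_\alpha)=0$ for every $\alpha$, iff $\ker g \supseteq I$ (because the $e_\alpha$ generate $I$ as a two-sided ideal), iff $g$ factors through $R \to R/I$. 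Thus $R_\Sigma$ and $R/I$ satisfy the same universal property over $R$, yielding the canonical isomorphism $R_\Sigma \cong R/I$ and presenting $f$ as a universal localization.

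The step I expect to be most delicate is the bijectivity analysis of $\phi_\alpha \otimes_R T$: one has to show precisely that an iso here is equivalent to killing $e_\alpha$, and this uses idempotency of $g(e_\alpha)$ in an essential way (without it, the kernel description and the automatic surjectivity on $g(e_\alpha)=0$ both fail). Everything else is essentially formal: Lemma \ref{3} reduces us to a quotient, the idempotent generation of $I$ is immediate from von Neumann regularity, and the universal property of $R/I$ then does the bookkeeping.
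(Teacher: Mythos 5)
Your proof is correct, but it takes a noticeably different route from the paper's. The paper writes $I$ as a directed union of finitely generated right ideals $I_\lambda$, invokes Goodearl's theorem that each inclusion $I_\lambda\hookrightarrow R$ splits, takes $\Sigma$ to consist of the split inclusions $J_\lambda\to R$ of the complementary summands, and then verifies the universal property in two steps: a tensor computation using flatness of $R/I$ to show $I_\lambda\otimes_R R/I=0$, and a direct limit argument $IS=\varinjlim I_\lambda\otimes_R S=0$ to get the factorization. You instead extract idempotent generators $e_\alpha$ of $I$ directly from the regularity identity $a=axa$, take $\Sigma$ to be the endomorphisms $r\mapsto(1-e_\alpha)r$ of the free module $R$, and reduce everything to the elementary observation that left multiplication by $1-\epsilon$ ($\epsilon$ idempotent) is bijective iff $\epsilon=0$; the identification $R_\Sigma\cong R/I$ then falls out of matching universal properties with no flatness or limit arguments. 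The two constructions are closely related --- your $\phi_\alpha$ is essentially the composite of the projection onto the complement of $e_\alpha R$ with its inclusion, and inverting either map imposes the same condition $g(e_\alpha)=0$ --- but your version is more self-contained and cleaner in its bookkeeping (the paper's written proof suffers from notational slips, e.g.\ $J_\lambda=R/I_\lambda$ where a complementary summand is meant, and an unexplained $g_\lambda$). One small point worth making explicit in your write-up: you should note that the projection $R\to R/I$ itself inverts every $\phi_\alpha$ (immediate, since $e_\alpha\in I$), so that $R/I$ genuinely is an object of the category over which $R_\Sigma$ is initial; you use this implicitly when you say the two objects ``satisfy the same universal property.''
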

\begin{proof}Let $f: R\twoheadrightarrow R/I$ be an epimorphism. It is well known that $I$ can be written as a direct union of a family  $\{I_{\lambda}|\lambda\in \Lambda\}$ of finitely generated ideals. We obtain the following commutative diagram

$$\xymatrix{
   0 \ar[r]^{} &I_{\lambda} \ar@{>->}[d]_{i_{\lambda}}\ar[r]^{\theta_{\lambda}} &R  \ar@{=}[d]_{}\ar[r]^{\sigma _{\lambda}} & R/I_{\lambda} \ar@{->>}[d]_{\pi_{\lambda}}\ar[r]^{} &  0\\
    0 \ar[r]^{} &I\ar[r]^{\theta} &R \ar[r]^{\sigma} & R/I\ar[r]^{} &  0\\}$$
Since $I_\lambda$ is finitely generated and $R$ is a von Neumann regular ring, $g_\lambda$ is a splitting monomorphism by \cite[Theorem 1.11]{Goo79}. Let $J_{\lambda}=R/I_{\lambda}$ such that $I_{\lambda}\oplus J_{\lambda}=R$, $\sigma _{\lambda}: J_{\lambda}\rightarrow R$ a homomorphism such that $s_\lambda \sigma _{\lambda}=Id_{J_{\lambda}}$ and $t _{\lambda}: R\rightarrow I_{\lambda}$ a homomorphism such that $t_\lambda \theta _{\lambda}=Id_{I_{\lambda}}$. Let $\Sigma =\{\sigma_{\lambda}|\lambda\in \Lambda\}$, we prove $f$ is equivalent to the universal localization $R\rightarrow R_{\Sigma}$.

Firstly, we show that $\sigma_{\lambda}\otimes_R R/I$ is an isomorphism for each $\lambda\in \Lambda$. Let $0\rightarrow J_{\lambda}\rightarrow R\rightarrow I_{\lambda}\rightarrow 0$ a split short exact sequence. We obtain that the short exact sequence $0\rightarrow J_{\lambda}\otimes_R R/I\rightarrow R\otimes_R R/I\rightarrow I_{\lambda}\otimes_R R/I\rightarrow 0$ is split. Since $R/I$ is flat and $I_{\lambda}\subseteq I$, we get $I_{\lambda}\otimes_R R/I=I_{\lambda}R/I=0$. Consequently, $\sigma_{\lambda}\otimes_R R/I$ is an isomorphism.

Secondly, let $g: R\rightarrow S$ be any ring homomorphism such that $\sigma_{\lambda}\otimes_RS$ is an $S$-isomorphism for any $\sigma_{\lambda}\in \Sigma$. We will show that $g$ factors uniquely through $\sigma$. Similar to the proof of the first part, we have $I_{\lambda}\otimes_R S=0$, thus $g\theta(I)\leq IS=I\otimes_R S= \lim\limits_{\rightarrow} I_{\lambda}\otimes_R S =0$. Consequently, there is a unique homomorphism $h:R/I\rightarrow S$ such that the following diagram commutes,

$$\xymatrix{
 & & &S&\\
    0 \ar[r]^{} &I\ar[r]^{\theta} &R \ar[r]^{\sigma} \ar[ru]^{g}& R/I\ar[r]^{} \ar@{-->}[u]^{h}&  0\\}$$.
\end{proof}
As an application, we get the following main result.

\begin{theorem}\label{4} Let $R$ be a  von Neumann regular ring $($commutative or non-commutative$)$, then the Telescope Conjecture holds for $\D(R)$.
\end{theorem}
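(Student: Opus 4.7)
The plan is to apply Corollary \ref{5} directly, reducing the Telescope Conjecture for $\D(R)$ to a statement about ring epimorphisms that is already encoded in the preceding two lemmas. Before doing so, I would check that any von Neumann regular ring $R$ satisfies the hypothesis of Corollary \ref{5}, namely that $R$ is right semihereditary. This follows because every finitely generated right ideal of $R$ is a direct summand of $R$ (by \cite[Theorem 1.11]{Goo79}, which was also the point of entry in the proof of Lemma \ref{31}), hence projective, and a ring whose finitely generated right ideals are projective is, by definition, right semihereditary.

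Once Corollary \ref{5} applies, the conjecture is equivalent to the assertion that every homological epimorphism $f : R \to S$ originating at $R$ is a universal localization. Since every homological epimorphism is in particular a ring epimorphism, Lemma \ref{31} then does the rest: it tells us that an arbitrary ring epimorphism originating at $R$ is already a universal localization, so the same holds for the restricted class of homological ones. Combining these two observations closes the argument.

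The substantive content of the theorem is not in this final invocation but in the two lemmas feeding it. The main obstacle has already been dealt with in Lemma \ref{3}, where Isbell's dominion criterion for ring epimorphisms is combined with the stability of von Neumann regularity under passage to matrix rings to force every epimorphism out of $R$ to be surjective; Lemma \ref{31} then writes the kernel as a directed union of finitely generated ideals, each a direct summand of $R$, and uses the corresponding family of splittings to present $R \twoheadrightarrow R/I$ as a universal localization. Given those ingredients, the theorem itself is essentially a one-line consequence of Corollary \ref{5}, and I would present it as such rather than redoing any of the combinatorial work.
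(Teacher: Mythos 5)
Your proposal is correct and follows essentially the same route as the paper: invoke Lemma \ref{31} to conclude that every homological epimorphism originating at $R$ is a universal localization, then apply Corollary \ref{5}. Your explicit verification that a von Neumann regular ring is right semihereditary (so that Corollary \ref{5} actually applies) is a small but welcome addition that the paper leaves implicit.
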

\begin{proof}
By Lemma \ref{31}, every homological epimorphism is a universal localization. Subsequently, the Telescope Conjecture holds for $\D(R)$ by Corollary \ref{5}.
\end{proof}

\end{document}